\theoremstyle{plain}
 \newtheorem{thm}{\textbf{Theorem}}[section]
 \newtheorem{prop}{\textbf{Proposition}}[section]
 \newtheorem{lem}{\textbf{Lemma}}[section]
\theoremstyle{definition}
\theoremstyle{remark}
 \numberwithin{equation}{section}
\renewcommand{\leq}{\leqslant}
\renewcommand{\geq}{\geqslant}
\title[Hilbert Spaces Contractively Contained in Weighted Bergman Spaces]{Hilbert Spaces Contractively Contained in Weighted Bergman Spaces on the Unit Disk}
\subjclass[2010]{Primary 47B32}
\author[Chu]{\bfseries Cheng Chu}
\address{
Department of Mathematics \\ 
Vanderbilt University  \\ 
Nashville, Tennessee \\
USA}
\email{cheng.chu@vanderbilt.edu}
\begin{document}

\vspace{18mm}
\setcounter{page}{1}
\thispagestyle{empty}

\begin{abstract}
Sub-Bergman Hilbert spaces are analogues of de Branges-Rovnyak spaces in the Bergman space setting. They are reproducing kernel Hilbert spaces contractively contained in the Bergman space of the unit disk. K. Zhu analyzed sub-Bergman Hilbert spaces associated with finite Blaschke products, and proved that they are norm equivalent to the Hardy space. Later S. Sultanic found a different proof of Zhu's result, which works in weighted Bergman space settings as well. In this paper, we give a new approach to this problem and obtain a stronger result. Our method relies on the theory of reproducing kernel Hilbert spaces.
\end{abstract}

\maketitle

\section{Introduction}  

Let $\DD$ denote the unit disk.
Let $L^2$ denote the Lebesgue space of square integrable functions on the unit circle $\TT$. The Hardy space $H^2$ is the subspace of analytic functions on $\DD$ whose Taylor coefficients are square summable. Then it can also be identified with the subspace of $L^2$ of functions whose negative Fourier coefficients vanish. 
The space of bounded analytic functions on the unit disk is denoted by $H^\infty$.
The Toeplitz operator on the Hardy space $H^2$ with symbol $f$ in $L^\infty(\TT)$ is defined by
$$T_f (h) = P(fh),$$ for $h\in H^2$. Here $P$ is the orthogonal projection from $L^2$ to $H^2$.

Let $A$ be a bounded operator on a Hilbert space $H$. We define the range space $\cM_H(A)=AH$, and endow it with the inner product
$$\langle Af, Ag \rangle_{\mathcal{M}_H(A)}=\langle f, g \rangle_{H},\qq f,g\in H \ominus \m{Ker}A.$$ $\cM_H(A)$ has a Hilbert space structure that makes $A$ a coisometry on $H$.
Let $b$ be a function in $H^\infty_1$, the closed unit ball of $H^\infty$. The de Branges-Rovnyak space $\cH(b)$ is defined to be the space 
$$\cM_{H^2}((I-T_b T_{\bar b})^{1/2})$$
We also define the space $\cH(\bar b)$ in the same way as $\cH(b)$, but with the roles of $b$ and $\bar{b}$ interchanged, i.e. $$\cH(\bar{b})=\cM_{H^2}((I-T_{\bar b} T_b)^{1/2}).$$ 
The spaces $\cH(b)$ and $\cH(\bar b)$ are also called sub-Hardy Hilbert spaces (the terminology comes from the title of Sarason's book \cite{sar94}).

The space $\cH(b)$ was introduced by de Branges and Rovnyak \cite{deb-rov1}. Sarason and several others made essential contributions to the theory \cite{sar94}. A recent two-volume monograph \cite{fri16-1}, \cite{fri16-2} presents most of the main developments in this area.

In this paper, we study analogues of sub-Hardy Hilbert spaces in a general setting. The Bergman space $A^2$ is the space of analytic functions on $\DD$ that are square-integrable with respect to the normalized Lebesgue area measure $dA$. For $u\in L^\infty(\DD)$, the Bergman Toeplitz
operator $\tiT_u$ with symbol $u$ is the operator on $L^2_a$ defined by 
\beq\label{berg}\tiT_u h = \tiP(uh).\eeq
Here $\tiP$ is the orthogonal projection from $L^2(\DD, dA)$ onto $A^2$.
In \cite{zhu96}, Zhu introduced the sub-Bergman Hilbert spaces. They are defined by
$$\cA(b)=\cM_{A^2}((I-\tiT_b \tiT_{\bar b})^{1/2})$$ and $$\cA(\bar{b})=\cM_{A^2}((I-\tiT_{\bar b} \tiT_b)^{1/2}).$$ Here $b$ is a function in $H^\infty_1$. It is easy to see from the definition that the spaces $\cA(b)$ and $\cA(\bar{b})$ are contractively contained in $A^2$, i.e. $\cA(b)\subset A^2$ and the inclusion map has norm at most $1$. But in most cases they are not closed subspaces of $A^2$ (see \cite{zhu96}*{Corollary 3.13}).

Sub-Bergman Hilbert spaces share some common properties with sub-Hardy Hilbert spaces as the way those spaces are defined follows from a general theory on Hilbert space contractions \cite{sar94}. For instance, both $\cH(b)$ and $\cA(b)$ are invariant under the corresponding Toeplitz operators with a co-analytic symbol \cite{sar94}*{II-7}.
One significant difference between the spaces $\cA(b)$ and $\cH(b)$ is the multipliers. The theory of $\cH(b)$ spaces is pervaded by a fundamental dichotomy, whether $b$ is an extreme point of the unit ball of $H^\infty(\DD)$. The multiplier structure of de Branges-Rovnyak spaces has been studied extensively by Lotto and Sarason in both the extreme and the nonextreme cases \cite{lot90}, \cite{lotsar91}, \cite{lotsar93}. However, Zhu \cite{zhu96} showed that every function in $H^\infty$ is a multiplier of $\cA(b)$ and $\cA(\bar b)$. As a consequence, the two sub-Bergman Hilbert spaces $\cA(b)$ and $\cA(\bar{b})$ are norm equivalent (see e.g. \cite{chu18}).

We are interested in the structure of sub-Bergman Hilbert spaces. For de Branges-Rovnyak spaces $\cH(b)$, there are two special cases: if $||b||_\infty<1$, then $\cH(b)$ is just a renormed version $H^2$; if $b$ is an inner function (i.e. $|b|=1$, a.e. on $\TT$) , then $\cH(b)$ is a closed subspace of $H^2$, called the model space. In the Bergman space setting, if $||b||_\infty<1$, then $\cA(b)$ is norm equivalent to $A^2$. But it is not known what is the space $\cA(b)$ when $b$ is a general inner function. In \cite{zhu03}, Zhu describes the space $\cA(b)$ when $b$ is a finite Blaschke product. 
\begin{thm}\label{A}
If $b$ is a finite Blaschke product, then $\cA(b)=H^2$.
\end{thm}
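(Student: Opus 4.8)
The plan is to compute the reproducing kernel of $\cA(b)$ explicitly, recognise $\cA(b)$ as a finite sum of weighted copies of $H^2$, and then compare the two spaces with the help of a B\'ezout (corona) identity. First I would record the general principle, contained in the theory of $\cM(A)$ spaces, that for a bounded positive operator $T$ on a reproducing kernel Hilbert space $H$ with kernel $k_w$, the space $\cM_H(T^{1/2})$ is again a reproducing kernel Hilbert space and its kernel is $Tk_w$. Applying this with $H=A^2$ and $T=I-\tiT_b\tiT_{\bar b}$, and using that $b\in H^\infty$ (so that $\tiT_b$ is multiplication by $b$ on $A^2$ and $\tiT_{\bar b}K_w=\overline{b(w)}K_w$, where $K_w(z)=(1-\bar w z)^{-2}$ is the Bergman kernel), one obtains
\[
k^{\cA(b)}(z,w)=\big((I-\tiT_b\tiT_{\bar b})K_w\big)(z)=\frac{1-b(z)\overline{b(w)}}{(1-\bar w z)^2}.
\]

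Next I would factor the kernel as
\[
k^{\cA(b)}(z,w)=\frac{1}{1-\bar w z}\cdot\frac{1-b(z)\overline{b(w)}}{1-\bar w z}=\sum_{j=1}^{n}\frac{e_j(z)\,\overline{e_j(w)}}{1-\bar w z},
\]
where $n=\deg b$ and $\{e_1,\dots,e_n\}$ is any orthonormal basis of the model space $K_b=H^2\ominus bH^2$, so that $\sum_j e_j(z)\overline{e_j(w)}$ is the reproducing kernel of $K_b$. Each summand is the reproducing kernel of $e_jH^2:=\{e_jh:h\in H^2\}$ with the norm $\|e_jh\|:=\|h\|_{H^2}$, so by Aronszajn's description of the Hilbert space attached to a sum of kernels, $\cA(b)$ equals $\sum_{j=1}^{n}e_jH^2$ with norm $\|f\|^2=\inf\{\sum_j\|h_j\|_{H^2}^2:\ f=\sum_j e_jh_j\}$. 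Since $b$ is a finite Blaschke product, every element of $K_b$ is a rational function whose poles lie outside $\overline{\DD}$, hence $e_j\in H^\infty$; consequently $e_jH^2\subseteq H^2$ with $\|e_jh\|_{H^2}\le\|e_j\|_\infty\|h\|_{H^2}$, and summing these inclusions gives the continuous containment $\cA(b)\subseteq H^2$.

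For the reverse containment I would use that the $e_j$ have no common zero. Indeed $\sum_{j=1}^n|e_j(z)|^2$ is the restriction of the kernel of $K_b$ to the diagonal, namely $(1-|b(z)|^2)/(1-|z|^2)$, which is bounded below on $\DD$ by a positive constant because $1-|b(z)|^2\asymp 1-|z|^2$ on $\DD$ for finite Blaschke products. Hence, by the corona theorem (or, more elementarily, by a B\'ezout identity available since $e_1,\dots,e_n$ are rational with no common zero on $\overline{\DD}$), there exist $g_1,\dots,g_n\in H^\infty$ with $\sum_{j}g_je_j\equiv 1$. Then every $f\in H^2$ can be written as $f=\sum_j(g_jf)e_j$ with $g_jf\in H^2$ and $\|g_jf\|_{H^2}\le\|g_j\|_\infty\|f\|_{H^2}$, so
\[
\|f\|_{\cA(b)}^2\le\sum_{j}\|g_jf\|_{H^2}^2\le\Big(\sum_{j}\|g_j\|_\infty^2\Big)\|f\|_{H^2}^2,
\]
which gives $H^2\subseteq\cA(b)$ continuously. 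Combining the two containments yields $\cA(b)=H^2$ with equivalent norms.

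I expect the reverse containment $H^2\subseteq\cA(b)$ to be the main point: computing the kernel and rewriting $\cA(b)$ as $\sum_j e_jH^2$ is bookkeeping once one has the $\cM(A)$ kernel formula, and the easy containment is immediate from $e_j\in H^\infty$, but decomposing an arbitrary $f\in H^2$ with control on the norm forces one to exploit both the finite dimensionality of $K_b$ and the fact that, for a finite Blaschke product, the basis of $K_b$ has no common zero in $\overline{\DD}$, which is exactly what makes the B\'ezout step available. The same scheme should carry over verbatim to the weighted Bergman spaces $A^2_\alpha$, replacing $K_w$ by $(1-\bar w z)^{-(2+\alpha)}$ and the extra factor $(1-\bar w z)^{-1}$ by $(1-\bar w z)^{-(1+\alpha)}$, which presumably yields the stronger, weighted result announced in the abstract.
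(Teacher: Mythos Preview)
Your argument is correct, and for the containment $\cA(b)\subseteq H^2$ it is essentially the paper's: both expand the $\cH(b)$-kernel through an orthonormal basis $\{e_j\}$ of the $n$-dimensional model space and use that each $e_j\in H^\infty$; the paper packages this as the kernel inequality $(1-\overline{b(w)}b(z))(1-\bar w z)^{-2}\preceq NC\,(1-\bar w z)^{-1}$ rather than via Aronszajn's sum description, but the content is the same. The genuine difference is in the direction $H^2\subseteq\cA(b)$, which you flag as the main point. You obtain it from a B\'ezout identity for $e_1,\dots,e_n$, and that step really does need $b$ to be a finite Blaschke product. The paper instead observes that the single function $f_0(z)=(1-\overline{b(0)}b(z))/\sqrt{1-|b(0)|^2}$, the normalized $\cH(b)$-kernel at the origin, is invertible in $H^\infty$ for \emph{every} non-constant $b\in H^\infty_1$; combining $\overline{f_0(w)}f_0(z)\preceq (1-\overline{b(w)}b(z))/(1-\bar w z)$ with the bounded invertibility of $T_{\bar f_0}$ on $H^2$ yields $(1-\bar w z)^{-1}\preceq C\,(1-\overline{b(w)}b(z))(1-\bar w z)^{-2}$ directly. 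This is both simpler (no corona step) and strictly stronger: it gives $A^2_{\alpha-1}\subseteq\cA_\alpha(b)$ for all non-constant $b$, so in the paper's framework the inclusion you call the hard one is actually the universal one, while $\cA(b)\subseteq H^2$ is what characterizes finite Blaschke products.
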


In \cite{sul06}, Sultanic reproved Theorem \ref{A} using a different technique, and extend the result to the weighted Bergman space setting. The main purpose of this paper is to give a new approach to this problem. We use the theory of reproducing kernel Hilbert spaces (presented in Section \ref{Pre}) and obtain stronger version of Theorem \ref{A}. 
\begin{thm}\label{m1}
For every non-constant function $b$ in $H^\infty_1$, $\cA(b)$ always contain $H^2$. Moreover, $$\cA(b)=H^2$$ if and only if $b$ is a finite Blaschke product.
\end{thm}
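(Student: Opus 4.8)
The plan is to reduce everything to a computation with reproducing kernels. Write $K(z,w) = (1-\bar w z)^{-2}$ for the Bergman kernel and $k(z,w) = (1-\bar w z)^{-1}$ for the Szeg\H{o} kernel. The key identity, coming from the general theory of range spaces (\cite{sar94}, II-7), is that $\cA(b)$ is the reproducing kernel Hilbert space with kernel
$$
K^{\cA(b)}(z,w) = \frac{1-b(z)\overline{b(w)}}{(1-\bar w z)^2}
= \bigl(1-b(z)\overline{b(w)}\bigr)\,K(z,w),
$$
since $\cA(b)=\cM_{A^2}((I-\tiT_b\tiT_{\bar b})^{1/2})$ and $\tiT_{\bar b}$ acts on Bergman kernels by $\tiT_{\bar b}K(\cdot,w)=\overline{b(w)}K(\cdot,w)$. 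The containment-and-norm-comparison tool is the standard fact: for reproducing kernel Hilbert spaces $\cH_1,\cH_2$ on $\DD$, one has $\cH_1\subseteq\cH_2$ with inclusion of norm at most $1$ if and only if $K_2-K_1$ is a positive definite kernel. So the whole theorem becomes two positivity statements about explicit kernels.

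First I would prove $H^2\subseteq\cA(b)$ for every non-constant $b\in H^\infty_1$, which by the above amounts to showing
$$
\frac{1-b(z)\overline{b(w)}}{(1-\bar w z)^2} - C\cdot\frac{1}{1-\bar w z}
$$
is a positive kernel for a suitable constant $C>0$ (the norm equivalence constant, not necessarily $1$; Theorem \ref{m1} as stated only claims set-theoretic containment). Factor out $k(z,w)=(1-\bar wz)^{-1}$, which is positive definite, and reduce to showing that
$$
\frac{1-b(z)\overline{b(w)}}{1-\bar w z} - C
$$
is positive definite. The first term is exactly the de Branges--Rovnyak kernel $K^{\cH(b)}_{\mathrm{Hardy}}(z,w)$, i.e. $k(z,w)-b(z)\overline{b(w)}k(z,w)$, which is always positive definite; subtracting the constant $C$ is subtracting $C$ times the rank-one kernel $1=\mathbf 1(z)\overline{\mathbf 1(w)}$, so I need the constant function to lie in $\cH(b)$ with $\|\mathbf 1\|^2_{\cH(b)}\le 1/C$. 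That holds for any non-constant $b$ because $\mathbf 1 - \overline{b(0)}b\in\cH(b)$ with controlled norm (standard $\cH(b)$ theory), and $\mathbf 1$ differs from this by an element of $bH^2$; one checks $\mathbf 1\in\cH(b)$ iff $b$ is non-constant. Alternatively, and perhaps more cleanly in the RKHS language, I would directly exhibit a bounded injection $H^2\to\cA(b)$ using that $1-b(z)\overline{b(w)}\ge (1-\|b\|_\infty^2)$ fails in general, so instead use that $\frac{1-b(z)\overline{b(w)}}{1-\bar wz}$ dominates a constant multiple of $1$ as kernels — this is the substantive positivity lemma.

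Next, the equivalence $\cA(b)=H^2\iff b$ a finite Blaschke product. The "if" direction is Theorem \ref{A}, already available. For "only if": suppose $\cA(b)=H^2$ with equivalent norms. Then $K^{\cA(b)}$ and $K^{H^2}=k$ are comparable kernels, so both $K^{\cA(b)} - \varepsilon\, k$ and $M k - K^{\cA(b)}$ are positive for some $0<\varepsilon<M$. The second inequality, after dividing by $k$, says $M - \frac{1-b(z)\overline{b(w)}}{1-\bar wz}\cdot\frac{1}{?}$... more precisely $M\,k - (1-b(z)\overline{b(w)})K$ positive; evaluating on the diagonal and letting $|z|\to1$ forces control on $\frac{1-|b(z)|^2}{(1-|z|^2)^2}$, which must stay bounded — but for a general inner $b$ (or more generally any $b$ whose boundary behavior is not "Blaschke-like") this quantity blows up. The cleanest route: $\cA(b)=H^2$ forces $\frac{1-b(z)\overline{b(w)}}{1-\bar wz}$ to be comparable to the \emph{constant} kernel from above and to $k$-type growth — equivalently the multiplication operator relating the two kernels is bounded above and below, which I expect to force $1-|b|^2 \asymp (1-|z|^2)$ near $\TT$, and then a normal-families / Julia--Carathéodory argument pins $b$ down to a finite Blaschke product.

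The main obstacle I anticipate is the "only if" direction — specifically, extracting from the two-sided kernel comparison the precise boundary growth statement $1-|b(z)|^2 \asymp 1-|z|^2$ and then concluding finiteness of the Blaschke product. Getting $H^2\subseteq\cA(b)$ is essentially a one-line kernel positivity once the de Branges--Rovnyak kernel is recognized; the sharpness is where the real work lies, and I would lean on the fact that $\cA(b)=H^2$ implies the reproducing kernel $K^{\cA(b)}_w$ has $H^2$-norm comparable to $\|k_w\|$, test against $k_w$ itself, and analyze the resulting scalar function of $w$ as $w\to\TT$.
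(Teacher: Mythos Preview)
Your overall reproducing-kernel framework matches the paper's, and your treatment of the ``only if'' direction is essentially correct and identical to the paper's argument: from $M\,k - K^{\cA(b)}\succeq 0$ one evaluates on the diagonal to obtain $(1-|b(z)|^2)/(1-|z|^2)\le M$, and this bound is known to characterize finite Blaschke products.

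There is, however, a genuine gap in your argument for $H^2\subset\cA(b)$. After factoring out the Szeg\H{o} kernel you propose to show
\[
\frac{1-b(z)\overline{b(w)}}{1-\bar w z}-C\ \succeq\ 0
\]
for some $C>0$, which (as you note) is equivalent to $\mathbf 1\in\cH(b)$. But this is \emph{false} in general. If $b$ is inner with $b(0)\neq 0$, then $\cH(b)=H^2\ominus bH^2$ and $\langle \mathbf 1,b\rangle_{H^2}=\overline{b(0)}\neq 0$, so $\mathbf 1\notin\cH(b)$. Concretely, for a single Blaschke factor $b=\varphi_a$ with $a\neq 0$ the de~Branges--Rovnyak kernel is the rank-one kernel $(1-|a|^2)/[(1-\bar a z)(1-a\bar w)]$, which cannot dominate any nonzero constant kernel. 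Your proposed patch---that $\mathbf 1$ differs from $1-\overline{b(0)}b\in\cH(b)$ by an element of $bH^2$---does not help, since $\cH(b)$ need not absorb elements of $bH^2$; in the inner case it is orthogonal to them.

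The paper avoids this by \emph{not} dividing out $k$. It takes $f_0(z)=(1-\overline{b(0)}b(z))/\sqrt{1-|b(0)|^2}$, the normalized $\cH(b)$-kernel at $0$, which always lies in $\cH(b)$ with unit norm and is moreover \emph{invertible in $H^\infty$}. From $\|f_0\|_{\cH(b)}=1$ one has $\overline{f_0(w)}f_0(z)\preceq K^{\cH(b)}(z,w)$; multiplying by the Szeg\H{o} kernel gives
\[
\overline{f_0(w)}f_0(z)\,k(z,w)\ \preceq\ K^{\cA(b)}(z,w).
\]
The left side equals $\langle T_{\bar f_0}k_w^S,\,T_{\bar f_0}k_z^S\rangle_{H^2}$, and since $f_0^{-1}\in H^\infty$ the Toeplitz operator $T_{\bar f_0}$ is bounded below on $H^2$; hence $c^2\,k(z,w)\preceq \overline{f_0(w)}f_0(z)\,k(z,w)$ for some $c>0$. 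Chaining the two inequalities yields $c^2\,k\preceq K^{\cA(b)}$, i.e.\ $H^2\subset\cA(b)$. The crucial point is that $c^2\,k\preceq \overline{f_0(w)}f_0(z)\,k$ holds (via the operator bound) even though $c^2\preceq \overline{f_0(w)}f_0(z)$ does not; this operator-theoretic step is exactly what replaces the ``factor out $k$'' move that your argument relies on and that fails.
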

In Section \ref{pf}, we shall prove Theorem \ref{m1} in the weighted Bergman space setting.

\section{Reproducing kernel Hilbert spaces}\label{Pre}
In this section, we present some basic theory of reproducing kernel Hilbert spaces. For more information about reproducing kernels and their associated Hilbert spaces, see \cite{aro50}, \cite{pau16}.

Let $X\subset \CC^d$. We say a function $K: X\times X\to\CC$ is a positive kernel on $X$ (written as $K	\succeq 0$) if it is self-adjoint ($K(x,y)=\ol{K(y,x)}$), and for all finite sets $\{\Gl_1,\Gl_2,\dots, \Gl_{n}\}\subset X$, the matrix $(K(\Gl_i,\Gl_j))_{i,j=1}^n$ is positive semi-definite, i.e., for all complex numbers $\Ga_1, \Ga_2, \dots, \Ga_n$,
$$
\sum_{i,j=1}^n \Ga_i\bar{\Ga_j}K(\Gl_i, \Gl_j)\geq 0.
$$

A reproducing kernel Hilbert space $\mathcal{H}$ on $X$ is a Hilbert space of complex valued functions on $X$ such that every point evaluation is a continuous linear functional. Thus for every $w\in X$, there exists an element $K_w\in\mathcal{H}$ such that for each $f\in\mathcal{H}$, $$\langle f, K_w\rangle_{\mathcal{H}} =f(w).$$
Since $K_w(z)=\langle K_w, K_z\rangle_{\mathcal{H}}$, $K$ can be regarded as a function on $X\times X$ and we write $K(z,w)= K_w(z)$. Such $K$ is a positive kernel and the Hilbert space $\mathcal{H}$ with reproducing kernel $K$ is denoted by $\mathcal{H}(K)$.

The following theorem, due to Moore, shows that there is a one-to-one correspondence between reproducing kernel Hilbert spaces and positive kernels (see e.g. \cite{ampi}*{Theorem 2.23}).
\begin{thm}\label{Moore}
Let $X\subset \CC^d$ and let $K: X\times X\to\CC$ be a positive kernel. Then there exists a unique reproducing kernel Hilbert space $\mathcal{H}(K)$ whose reproducing kernel is $K$.
\end{thm}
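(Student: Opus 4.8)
The plan is to establish the two assertions of the theorem in turn: first construct a reproducing kernel Hilbert space whose reproducing kernel equals the given $K$, and then show that any two such spaces coincide. The construction is the classical one of building the space out of the sections of $K$ themselves. For each $w\in X$ let $K_w$ denote the function $z\mapsto K(z,w)$ on $X$, and let $\mathcal{H}_0$ be the linear span of $\{K_w:w\in X\}$. On $\mathcal{H}_0$ I would define a sesquilinear form by setting $\langle K_w,K_u\rangle=K(u,w)$ and extending; thus for $f=\sum_i\alpha_i K_{w_i}$ and $g=\sum_j\beta_j K_{u_j}$ the form is $\langle f,g\rangle=\sum_{i,j}\alpha_i\bar\beta_j K(u_j,w_i)$. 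Note that the single-term choice $g=K_w$ already gives $\langle f,K_w\rangle=\sum_i\alpha_i K(w,w_i)=f(w)$, so $\mathcal{H}_0$ has the reproducing property against every $K_w$.

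First I would verify that this form is well defined, i.e. independent of the chosen representations of $f$ and $g$. The point is that regrouping the double sum one way yields $\langle f,g\rangle=\sum_j\bar\beta_j f(u_j)$, which depends only on $f$ as a function together with the representation of $g$, while regrouping the other way and using that $K$ is self-adjoint yields $\langle f,g\rangle=\sum_i\alpha_i\,\overline{g(w_i)}$, which depends only on $g$ as a function together with the representation of $f$; combining the two shows the value depends only on $f$ and $g$ as functions on $X$. Next, taking $f=g$ gives $\langle f,f\rangle=\sum_{i,j}\alpha_i\bar\alpha_j K(w_j,w_i)$, which is nonnegative by the defining positivity of $K$ (the inequality applied to the coefficients $\bar\alpha_1,\dots,\bar\alpha_n$), so the form is positive semi-definite. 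By the Cauchy-Schwarz inequality for such forms, $\langle f,f\rangle=0$ forces $\langle f,K_w\rangle=0$ for every $w$, hence $f(w)=0$ for all $w$ and $f$ is the zero function. Therefore the form is a genuine inner product on $\mathcal{H}_0$.

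The step I expect to require the most care is passing to the completion while retaining a concrete space of functions. Let $\mathcal{H}$ be the abstract Hilbert space completion of $\mathcal{H}_0$. The crucial estimate is $|f(w)|=|\langle f,K_w\rangle|\le\|f\|\,K(w,w)^{1/2}$, valid for $f\in\mathcal{H}_0$: it shows each point evaluation is bounded on $\mathcal{H}_0$, so any Cauchy sequence in $\mathcal{H}_0$ converges pointwise on $X$ to a function, and two Cauchy sequences with the same limit in $\mathcal{H}$ have the same pointwise limit. This lets me identify every element of $\mathcal{H}$ with a well-defined function on $X$; the same estimate shows the identification is injective, so no nonzero vector of $\mathcal{H}$ is sent to the zero function, which is exactly the subtlety that must be checked. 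Realizing $\mathcal{H}$ in this way, point evaluations remain continuous, each $K_w$ lies in $\mathcal{H}$, and the identity $\langle f,K_w\rangle=f(w)$ extends from $\mathcal{H}_0$ to all of $\mathcal{H}$ by continuity. Thus $\mathcal{H}$ is a reproducing kernel Hilbert space with reproducing kernel $K$.

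For uniqueness, suppose $\mathcal{H}_1$ and $\mathcal{H}_2$ are reproducing kernel Hilbert spaces on $X$ both having reproducing kernel $K$. In either space the reproducing property forces the kernel at $w$ to be $K_w$ and yields $\langle K_w,K_u\rangle=K(u,w)$, so the two inner products agree on the common span $\mathcal{H}_0$. Moreover $\mathcal{H}_0$ is dense in each $\mathcal{H}_i$: if $f\in\mathcal{H}_i$ is orthogonal to every $K_w$ then $f(w)=\langle f,K_w\rangle=0$ for all $w$, whence $f=0$. Consequently any vector of $\mathcal{H}_1$ is the pointwise limit of a sequence from $\mathcal{H}_0$ that is Cauchy in the common norm, and that same sequence has a limit in $\mathcal{H}_2$ with the same pointwise values; this identifies $\mathcal{H}_1$ and $\mathcal{H}_2$ as the same set of functions carrying the same norm, proving uniqueness.
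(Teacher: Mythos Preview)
The paper does not actually prove this theorem; it is stated as background and attributed to Moore with a reference (``see e.g.\ \cite{ampi}*{Theorem 2.23}''). Your argument is the standard Moore--Aronszajn construction and is correct. One small point: the injectivity of the identification of the abstract completion with a space of functions does not follow from the pointwise estimate $|f(w)|\le\|f\|\,K(w,w)^{1/2}$ alone; rather, if a Cauchy sequence $(f_n)$ in $\mathcal{H}_0$ has pointwise limit $0$, one uses $\langle \xi,K_w\rangle=\lim_n\langle f_n,K_w\rangle=\lim_n f_n(w)=0$ together with the density of $\mathcal{H}_0$ to conclude that the abstract limit $\xi$ vanishes. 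This is implicit in what you wrote but worth making explicit, since you flag it as ``exactly the subtlety that must be checked.''
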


For two positive kernels $K_1, K_2$, we write $$K_1	\preceq K_2$$ to mean that $$\label{n} K_2-K_1\succeq 0.$$
It is easy to check the sum of two positive kernels is still a positive kernel. The following result shows that the same holds for a product of two positive kernels, which generalizes the Schur product in matrix algebra (see e.g. \cite{aro50}).
\begin{prop}
Let $K_1, K_2$ be positive kernels on $X$. Then
$$K_1\cdot K_2\succeq 0.$$
\end{prop}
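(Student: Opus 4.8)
The plan is to exploit the correspondence between positive kernels and Hilbert-space-valued ``feature maps'' furnished by Theorem~\ref{Moore}. Self-adjointness of $K_1\cdot K_2$ is immediate, since
$\overline{K_1(x,y)K_2(x,y)}=\overline{K_1(x,y)}\,\overline{K_2(x,y)}=K_1(y,x)K_2(y,x)$,
so the whole content of the statement is the positive semi-definiteness of the pointwise products of the associated matrices.

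First I would record the factorization. If $K\succeq 0$ on $X$, then by Theorem~\ref{Moore} there is a Hilbert space $\mathcal{H}(K)$ and a map $x\mapsto K_x\in\mathcal{H}(K)$ with $K(x,y)=\langle K_y,K_x\rangle_{\mathcal{H}(K)}$. Applying this to $K_1$ and $K_2$ separately yields Hilbert spaces $\mathcal{H}_1,\mathcal{H}_2$ and maps $\phi_j(x)\in\mathcal{H}_j$ with $K_j(x,y)=\langle\phi_j(y),\phi_j(x)\rangle_{\mathcal{H}_j}$. Next, form the Hilbert space tensor product $\mathcal{H}_1\otimes\mathcal{H}_2$ and set $\psi(x)=\phi_1(x)\otimes\phi_2(x)$. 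Since the tensor product inner product satisfies $\langle a_1\otimes a_2,\,b_1\otimes b_2\rangle=\langle a_1,b_1\rangle\langle a_2,b_2\rangle$, we get $\langle\psi(y),\psi(x)\rangle=K_1(x,y)K_2(x,y)$. Hence for any finite set $\{\Gl_1,\dots,\Gl_n\}\subset X$ and scalars $\Ga_1,\dots,\Ga_n$,
$\sum_{i,j=1}^n \Ga_i\overline{\Ga_j}\,K_1(\Gl_i,\Gl_j)K_2(\Gl_i,\Gl_j)=\bigl\|\sum_{i=1}^n\overline{\Ga_i}\,\psi(\Gl_i)\bigr\|^2\ge 0$,
which is precisely $K_1\cdot K_2\succeq 0$.

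If one wishes to avoid infinite-dimensional tensor products, the same computation can be done purely with matrices: fixing the finite set, write the positive semi-definite matrices $A=(K_1(\Gl_i,\Gl_j))$ and $B=(K_2(\Gl_i,\Gl_j))$ as $A=CC^*$ and $B=DD^*$, so that $A_{ij}B_{ij}=\sum_{k,l}C_{ik}D_{il}\,\overline{C_{jk}D_{jl}}$, which displays the Hadamard product $(A_{ij}B_{ij})$ as a Gram matrix and hence as a positive semi-definite matrix. The proof is short and I do not anticipate a genuine obstacle; the only points requiring care are the index bookkeeping in the Gram/tensor identification and, in the Hilbert space version, the defining formula for the tensor product inner product.
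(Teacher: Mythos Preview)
Your argument is correct: both the tensor-product/feature-map route and the finite-dimensional Gram-factorization route are standard, valid proofs of the Schur product theorem, and your index bookkeeping checks out (with the paper's convention $K(x,y)=\langle K_y,K_x\rangle$, the sum indeed becomes $\|\sum_i\overline{\Ga_i}\,\psi(\Gl_i)\|^2$). The paper itself does not supply a proof of this proposition; it simply states the result and cites Aronszajn \cite{aro50}, so there is no in-paper argument to compare against---your write-up fills in what the paper leaves to the literature.
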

In particular, if $$K_1\preceq K_2,$$ then we can multiply another positive kernel $K$ on both sides to get $$K_1\cdot K\preceq K_2\cdot K.$$

The following proposition will be used in the proof of the main results.
\begin{prop}\label{prop}
Let $\mathcal{H}(K)$ be a reproducing kernel Hilbert space on $X$ and let $A$ be a bounded linear operator on $\cH(K)$. 
\begin{enumerate}
\item If $$||A||\leq C,$$ then 
$$
\la AK_w, AK_z\ra_{\cH(K)}\preceq C^2\la K_w, K_z\ra_{\cH(K)}.
$$
\item If $$||Af||_{\cH(K)} \geq c||f||_{\cH(K)},\q \m{for every}\q f\in \cH(K),$$ then 
$$
c^2\la K_w, K_z\ra_{\cH(K)}\preceq \la AK_w, AK_z\ra_{\cH(K)}.
$$
\end{enumerate} 
\end{prop}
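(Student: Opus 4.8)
The plan is to prove both statements by exhibiting, for an arbitrary finite set of points, a positive semi-definite matrix whose existence certifies the claimed kernel inequality; the two parts are formally dual, so I will set up the computation once. Fix points $w_1,\dots,w_n\in X$ and scalars $\alpha_1,\dots,\alpha_n\in\CC$, and consider the vectors $f=\sum_i \alpha_i K_{w_i}$ and $g=\sum_i \alpha_i A K_{w_i}$ in $\cH(K)$. The key identity is that, by the reproducing property and sesquilinearity,
$$
\sum_{i,j} \alpha_i \bar\alpha_j \la AK_{w_i}, AK_{w_j}\ra_{\cH(K)} = \Big\| \sum_i \alpha_i A K_{w_i}\Big\|_{\cH(K)}^2 = \|Af\|_{\cH(K)}^2,
$$
and similarly $\sum_{i,j}\alpha_i\bar\alpha_j\la K_{w_i},K_{w_j}\ra_{\cH(K)}=\|f\|_{\cH(K)}^2$. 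So both quadratic forms in question are literally $\|Af\|^2$ and $\|f\|^2$ for the same vector $f$ ranging over the (dense) span of kernel functions.

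For part (1): from $\|A\|\le C$ we get $\|Af\|_{\cH(K)}^2 \le C^2\|f\|_{\cH(K)}^2$ for every $f$, hence in particular for every $f$ of the above form; translating back via the two identities gives
$$
\sum_{i,j}\alpha_i\bar\alpha_j\Big( C^2\la K_{w_i},K_{w_j}\ra - \la AK_{w_i},AK_{w_j}\ra\Big) \;=\; C^2\|f\|^2 - \|Af\|^2 \;\ge\; 0,
$$
which is exactly the statement that the matrix $\big(C^2 K(w_j,w_i) - \la AK_{w_i},AK_{w_j}\ra\big)_{i,j}$ is positive semi-definite, i.e. $\la AK_w,AK_z\ra \preceq C^2\la K_w,K_z\ra$. (One should note the Hermitian-symmetry of both kernels, which follows from $\la K_w,K_z\ra = \overline{\la K_z,K_w\ra}$ and likewise for the $A$-twisted version, so that ``positive semi-definite matrix for every finite point set'' is indeed the definition of $\succeq 0$.) Part (2) is identical with the roles of the two forms swapped: the hypothesis $\|Af\|_{\cH(K)}\ge c\|f\|_{\cH(K)}$ gives $\|Af\|^2 - c^2\|f\|^2\ge 0$ for all $f$, hence the matrix $\big(\la AK_{w_i},AK_{w_j}\ra - c^2 K(w_j,w_i)\big)_{i,j}\succeq 0$, which is the asserted inequality $c^2\la K_w,K_z\ra\preceq \la AK_w,AK_z\ra$.

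The argument is essentially bookkeeping, so there is no serious obstacle; the only point requiring a line of care is making sure the quadratic-form inequality $\|Af\|^2\le C^2\|f\|^2$ is applied only to vectors $f$ in the linear span of $\{K_{w_i}\}$ (which is legitimate since the operator norm bound holds on all of $\cH(K)$), and conversely that the resulting positivity of the finite matrices is quantified over \emph{all} finite subsets of $X$, which is what Theorem \ref{Moore}'s notion of positive kernel demands. No density or closure argument is needed because the kernel inequality is, by definition, only a statement about finite point configurations.
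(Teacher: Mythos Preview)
Your proof is correct and follows exactly the same approach as the paper: set $f=\sum_j \alpha_j K_{w_j}$, recognize the two quadratic forms as $\|f\|^2$ and $\|Af\|^2$, and read off the positive-definiteness from the operator-norm (or lower-bound) inequality. The paper only writes out part~(1) and leaves part~(2) as ``similar,'' whereas you spell both out and add the remark on Hermitian symmetry, but there is no substantive difference.
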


\begin{proof}
We only prove (1) and (2) can be done in a similar way. 
For any distinct points $w_1,w_2,\dots, w_{n}$ in $X$ and complex numbers $\Ga_1, \Ga_2, \dots, \Ga_n$, let $$f(z)=\sum_{j=1}^{n}\Ga_jK_{w_j}(z).$$
We then have
\begin{align*}
&\sum_{i,j=1}^{n} \Ga_i\bar{\Ga_j} C^2 \la K_{w_i}, K_{w_j}\ra_{\cH(K)}-\sum_{i,j=1}^{n} \Ga_i\bar{\Ga_j} \la AK_{w_i}, AK_{w_j}\ra_{\cH(K)}\\
=&C^2 \la f, f\ra_{\cH(K)}-\la Af, Af\ra_{\cH(K)}\\
=&C^2 ||f||^2_{\cH(K)}-||Af||^2_{\cH(K)}\geq 0.
\end{align*}
It then follows from the definition of a positive kernel.
\end{proof}

We will need the next theorem that characterizes the functions that belong to a reproducing kernel Hilbert space in terms of the reproducing kernel.
\begin{thm}\cite{pau16}*{Theorem 3.11}\label{be}
Let $\mathcal{H}(K)$ be a reproducing kernel Hilbert space on $X$ and let $f: X\to\CC$ be a function. Then $f\in \mathcal{H}(K)$ with $||f||_{\mathcal{H}(K)}\leq c$ if and only if
$$
\overline{f(w)}f(z)\preceq c^2K(z,w),
$$
\end{thm}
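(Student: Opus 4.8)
The plan is to read the relation $\ol{f(w)}f(z)\preceq c^2K(z,w)$ as the statement that the kernel $L(z,w):=c^2K(z,w)-\ol{f(w)}f(z)$ is positive, and to prove the two implications separately. As a preliminary observation, $\ol{f(w)}f(z)$ is itself a positive kernel, since $\sum_{i,j}\Ga_i\bar{\Ga_j}\,\ol{f(w_j)}f(w_i)=|\sum_i\Ga_if(w_i)|^2\geq 0$; thus the theorem compares $c^2K$ with this rank-one kernel. For the forward implication, suppose $f\in\cH(K)$ with $\|f\|_{\cH(K)}\leq c$. Given points $w_1,\dots,w_n\in X$ and scalars $\Ga_1,\dots,\Ga_n$, I would put $g=\sum_j\bar{\Ga_j}K_{w_j}\in\cH(K)$ and evaluate the quadratic form of $L$. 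Using $\la K_{w_j},K_{w_i}\ra_{\cH(K)}=K(w_i,w_j)$ gives $\sum_{i,j}\Ga_i\bar{\Ga_j}\,c^2K(w_i,w_j)=c^2\|g\|_{\cH(K)}^2$, while the reproducing property $f(w_j)=\la f,K_{w_j}\ra_{\cH(K)}$ gives $\sum_j\Ga_jf(w_j)=\la f,g\ra_{\cH(K)}$, so that $\sum_{i,j}\Ga_i\bar{\Ga_j}\,\ol{f(w_j)}f(w_i)=|\la f,g\ra_{\cH(K)}|^2$. The form therefore equals $c^2\|g\|_{\cH(K)}^2-|\la f,g\ra_{\cH(K)}|^2$, which is nonnegative by the Cauchy--Schwarz inequality together with $\|f\|_{\cH(K)}\leq c$; hence $L\succeq 0$. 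This is essentially the computation already carried out in Proposition \ref{prop}.

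The substance of the theorem is the converse, and this is where the main difficulty lies. Assuming $L\succeq 0$, I would define a linear functional $\ell$ on the dense linear span $\mathcal{D}=\m{span}\{K_w:w\in X\}$ by $\ell(\sum_i\Ga_iK_{w_i})=\sum_i\Ga_i\,\ol{f(w_i)}$, a formula depending only on the pointwise values of $f$ and so meaningful without presupposing $f\in\cH(K)$. The point is that, after matching conjugations exactly as in the forward direction (taking $g=\sum_i\bar{\Ga_i}K_{w_i}$, one checks $\ell(g)=\ol{\sum_i\Ga_if(w_i)}$ and $\|g\|_{\cH(K)}^2=\sum_{i,j}\Ga_i\bar{\Ga_j}K(w_i,w_j)$), the hypothesis $L\succeq 0$ says precisely that $|\ell(g)|^2\leq c^2\|g\|_{\cH(K)}^2$ for every $g\in\mathcal{D}$. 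This single inequality does double duty: it forces $\ell$ to be well defined, since if a formal combination represents the zero element of $\cH(K)$ then the bound forces its $\ell$-value to vanish, and it shows $\ell$ is bounded with $\|\ell\|\leq c$. I would then extend $\ell$ by continuity to all of $\cH(K)$ --- using that $\mathcal{D}$ is dense, which holds because any element orthogonal to every $K_w$ vanishes identically --- and apply the Riesz representation theorem to obtain a unique $h\in\cH(K)$ with $\|h\|_{\cH(K)}=\|\ell\|\leq c$ and $\ell(g)=\la g,h\ra_{\cH(K)}$ for all $g$. Evaluating on $g=K_w$ then gives $\ol{f(w)}=\ell(K_w)=\la K_w,h\ra_{\cH(K)}=\ol{h(w)}$ for every $w\in X$, so $f=h\in\cH(K)$ and $\|f\|_{\cH(K)}\leq c$.

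The chief obstacle is thus not any single hard estimate but the bookkeeping that makes the equivalence exact: one must keep the conjugation conventions consistent so that the positivity of $L$ is translated without loss into the functional bound $|\ell(g)|\leq c\|g\|_{\cH(K)}$, and one must notice that this same bound is what simultaneously guarantees that $\ell$ is well defined and bounded on $\mathcal{D}$. Once these points are arranged, the density of the kernel span together with the Riesz representation theorem closes the argument and identifies the representing vector as $f$ itself.
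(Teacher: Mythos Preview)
Your argument is correct and is the standard proof of this result. Note, however, that the paper does not supply its own proof of this theorem: it is quoted as \cite{pau16}*{Theorem 3.11} and used as a black box, so there is no in-paper argument to compare against. What you have written is essentially the proof one finds in the cited reference --- the forward direction via Cauchy--Schwarz on finite kernel combinations, and the converse by reading the positive-kernel hypothesis as a norm bound on the functional $\ell$ defined on $\m{span}\{K_w\}$, then invoking density and Riesz representation to identify the representing vector with $f$.
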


A function $\varphi: X\to\CC$ is called a multiplier of $\mathcal{H}(K)$ on $X$ if $\varphi f\in \mathcal{H}(K)$ whenever $f\in \mathcal{H}(K)$. If $\varphi$ is a multiplier of $\mathcal{H}(K)$, let $M_\varphi: f\longmapsto \varphi f$ be the multiplication operator on $\mathcal{H}(K)$. In this case, it is well-known that the kernel functions are eigenvectors for the adjoints of multiplication operators: \beq\label{toe}
M_{\varphi}^{*}K_z=\overline{\varphi(z)}K_z.
\eeq
The following theorem characterizes multipliers of reproducing kernel Hilbert spaces (see e.g. \cite{ampi}*{Corollary 2.37}). 

\begin{thm}\label{m}
Let $\mathcal{H}(K)$ be a reproducing kernel Hilbert space on $X$, and let $\varphi: X\to\CC$ be a function. Then $\varphi$ is a multiplier of $\mathcal{H}(K)$ with multiplier norm at most $\Gd$ if and only if
$$
(\Gd^2-\varphi(z)\overline{\varphi(w)})\cdot K(z,w)\succeq 0.
$$
If $\Gd\leq 1$, then $\varphi$ is called a contractive multiplier of $\mathcal{H}(K)$.
\end{thm}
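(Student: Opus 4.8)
The plan is to prove both implications by translating the positivity of the kernel $(\Gd^2-\varphi(z)\ol{\varphi(w)})K(z,w)$ into the operator bound $\|M_\varphi\|\le\Gd$ and conversely, mirroring the computation already carried out in the proof of Proposition \ref{prop}. In both directions the bridge is the eigenvector identity \eqref{toe}, namely $M_\varphi^* K_z=\ol{\varphi(z)}K_z$, together with the standard fact that the linear span $\mathcal{D}$ of the kernel functions $\{K_w:w\in X\}$ is dense in $\cH(K)$: if $f\perp K_w$ for every $w$, then $f(w)=\la f,K_w\ra=0$ for every $w$, so $f=0$.

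For the forward implication, I assume $\varphi$ is a multiplier with $\|M_\varphi\|\le\Gd$, so $\|M_\varphi^*\|\le\Gd$ as well. Fixing points $w_1,\dots,w_n\in X$ and scalars $\Ga_1,\dots,\Ga_n$, I set $g=\sum_{k}\ol{\Ga_k}K_{w_k}$ and use \eqref{toe} to compute $M_\varphi^* g=\sum_k\ol{\Ga_k}\,\ol{\varphi(w_k)}K_{w_k}$. Expanding both $\|g\|^2$ and $\|M_\varphi^* g\|^2$ as double sums in $K(w_i,w_k)=\la K_{w_k},K_{w_i}\ra$ and invoking $\|M_\varphi^* g\|\le\Gd\|g\|$ gives
\[
0\le\Gd^2\|g\|^2-\|M_\varphi^* g\|^2=\sum_{i,k=1}^{n}\Ga_i\ol{\Ga_k}\bigl(\Gd^2-\varphi(w_i)\ol{\varphi(w_k)}\bigr)K(w_i,w_k),
\]
which is exactly the assertion $(\Gd^2-\varphi(z)\ol{\varphi(w)})K(z,w)\succeq 0$.

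For the reverse implication, I assume the kernel is positive and define a linear map $T$ on $\mathcal{D}$ by $T\bigl(\sum_k c_kK_{w_k}\bigr)=\sum_k c_k\ol{\varphi(w_k)}K_{w_k}$. Running the previous expansion in reverse, the positivity hypothesis applied at the points $w_1,\dots,w_n$ with the scalars $\ol{c_1},\dots,\ol{c_n}$ yields $\Gd^2\|g\|^2-\|Tg\|^2\ge 0$ for every $g=\sum_k c_kK_{w_k}\in\mathcal{D}$. This single inequality does double duty: it forces $Tg=0$ whenever $g=0$, so $T$ is well defined, and it shows $\|Tg\|\le\Gd\|g\|$ on $\mathcal{D}$. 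Hence $T$ extends to a bounded operator on $\cH(K)$ with $\|T\|\le\Gd$. Finally, for $f\in\cH(K)$ and $w\in X$ the reproducing property gives $(T^*f)(w)=\la f,TK_w\ra=\varphi(w)\la f,K_w\ra=(\varphi f)(w)$, so $\varphi f=T^*f\in\cH(K)$; thus $\varphi$ is a multiplier and $\|M_\varphi\|=\|T^*\|=\|T\|\le\Gd$, as required.

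The two norm expansions into double sums are routine bookkeeping. I expect the only genuine (and mild) obstacle to be the well-definedness of $T$ in the reverse direction; the key observation is that this comes for free once the norm inequality $\|Tg\|\le\Gd\|g\|$ has been extracted from the positivity hypothesis, since a vanishing element of $\mathcal{D}$ is then sent to a vanishing element. Verifying the adjoint identity $T^*=M_\varphi$ and the density of $\mathcal{D}$ are both standard.
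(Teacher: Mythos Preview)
Your argument is correct. Note, however, that the paper does not supply its own proof of this theorem: it is stated with a reference to \cite{ampi}*{Corollary 2.37} and used as a black box, so there is nothing in the paper to compare against. Your proof is the standard one, and its forward half is essentially the computation already carried out in the paper's proof of Proposition~\ref{prop} (specialized to $A=M_\varphi^*$); the reverse half, with the well-definedness of $T$ read off from the norm inequality and the identification $T^*=M_\varphi$ via the reproducing property, is the natural companion argument.
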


As a corollary of Theorem \ref{m}, we have
\begin{thm}\label{in}
Let $\mathcal{H}(K_1)$ and $\mathcal{H}(K_2)$ be reproducing kernel Hilbert spaces on $X$. Then $$\mathcal{H}(K_1) \subset \mathcal{H}(K_2)$$ if and only if there is some constant $\Gd>0$ such that
$$
K_1\preceq \Gd K_2.
$$
\end{thm}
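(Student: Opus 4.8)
The plan is to view Theorem~\ref{in} as the case $\varphi\equiv1$ of the multiplier criterion of Theorem~\ref{m}: the set-theoretic inclusion $\mathcal{H}(K_1)\hookrightarrow\mathcal{H}(K_2)$ is exactly multiplication by the constant function $1$, and for $\varphi\equiv1$ the condition $(\Gd^2-\varphi(z)\ol{\varphi(w)})K(z,w)\succeq0$ formally degenerates to $\Gd^2K_2-K_1\succeq0$, i.e.\ $K_1\preceq\Gd^2K_2$. Since Theorem~\ref{m} as stated concerns a single space, I would argue the two directions directly. For the ``if'' direction, assume $K_1\preceq\Gd K_2$ and let $f\in\mathcal{H}(K_1)$, $c=\|f\|_{\mathcal{H}(K_1)}$. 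By Theorem~\ref{be}, $\ol{f(w)}f(z)\preceq c^2K_1(z,w)$; multiplying $K_1\preceq\Gd K_2$ by the nonnegative scalar $c^2$ and using that a sum of positive kernels is positive gives $\ol{f(w)}f(z)\preceq c^2\Gd\,K_2(z,w)$, so Theorem~\ref{be} applied in $\mathcal{H}(K_2)$ yields $f\in\mathcal{H}(K_2)$ with $\|f\|_{\mathcal{H}(K_2)}\le c\sqrt{\Gd}$. Hence $\mathcal{H}(K_1)\subset\mathcal{H}(K_2)$, the inclusion having norm at most $\sqrt{\Gd}$.

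For the ``only if'' direction, suppose $\mathcal{H}(K_1)\subset\mathcal{H}(K_2)$ and let $\iota\colon\mathcal{H}(K_1)\to\mathcal{H}(K_2)$ be the inclusion. I would first check that $\iota$ is bounded by the closed graph theorem: if $f_n\to f$ in $\mathcal{H}(K_1)$ and $f_n\to g$ in $\mathcal{H}(K_2)$, then continuity of point evaluations forces $f_n(x)\to f(x)$ and $f_n(x)\to g(x)$ for every $x\in X$, so $f=g$ and the graph of $\iota$ is closed; write $M=\|\iota\|$. The key step is to compute the action of $\iota^{*}\colon\mathcal{H}(K_2)\to\mathcal{H}(K_1)$ on reproducing kernels: since $\iota f=f$, for all $w\in X$ and $f\in\mathcal{H}(K_1)$
\[
\langle f,\iota^{*}K_2(\cdot,w)\rangle_{\mathcal{H}(K_1)}=\langle \iota f,K_2(\cdot,w)\rangle_{\mathcal{H}(K_2)}=f(w)=\langle f,K_1(\cdot,w)\rangle_{\mathcal{H}(K_1)},
\]
whence $\iota^{*}K_2(\cdot,w)=K_1(\cdot,w)$ — the analogue of \eqref{toe} for the ``multiplier'' $\iota=M_1$.

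With this identity in hand I would simply repeat the Gram-matrix computation from the proof of Proposition~\ref{prop}, now with $A$ replaced by $\iota^{*}$. For distinct $w_1,\dots,w_n\in X$ and scalars $\Ga_1,\dots,\Ga_n\in\CC$, put $g=\sum_{j}\ol{\Ga_j}\,K_2(\cdot,w_j)\in\mathcal{H}(K_2)$; then $\iota^{*}g=\sum_{j}\ol{\Ga_j}\,K_1(\cdot,w_j)$ and
\[
\sum_{i,j=1}^{n}\Ga_i\ol{\Ga_j}\bigl(M^2K_2(w_i,w_j)-K_1(w_i,w_j)\bigr)=M^2\|g\|_{\mathcal{H}(K_2)}^2-\|\iota^{*}g\|_{\mathcal{H}(K_1)}^2\ge0,
\]
the inequality holding because $\|\iota^{*}g\|\le\|\iota^{*}\|\,\|g\|=\|\iota\|\,\|g\|\le M\|g\|$. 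As $w_1,\dots,w_n$ and $\Ga_1,\dots,\Ga_n$ were arbitrary, $M^2K_2-K_1\succeq0$, i.e.\ $K_1\preceq M^2K_2$; taking $\Gd=M^2$ completes the proof.

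I expect the only substantive point to be the identity $\iota^{*}K_2(\cdot,w)=K_1(\cdot,w)$, namely that the adjoint of the inclusion sends the kernels of the larger space onto those of the smaller one; once that is available, the rest is the positive-definiteness bookkeeping already carried out in Proposition~\ref{prop}. The one technical prerequisite is the boundedness of $\iota$ (needed even to speak of $\iota^{*}$), and this is supplied for free by the closed graph theorem since both spaces have continuous point evaluations.
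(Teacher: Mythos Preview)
Your argument is correct in both directions. The paper itself offers no proof beyond the remark ``As a corollary of Theorem~\ref{m}'', so you are supplying the details the paper omits. You are also right to flag that Theorem~\ref{m} \emph{as stated in the paper} is a single-space multiplier criterion, so setting $\varphi\equiv1$ there does not literally yield Theorem~\ref{in}; what is really needed is the two-space version (multipliers from $\mathcal{H}(K_1)$ into $\mathcal{H}(K_2)$), which the cited reference in fact contains. Your direct proof is precisely the standard proof of that two-space criterion specialized to $\varphi\equiv1$: the identity $\iota^{*}K_2(\cdot,w)=K_1(\cdot,w)$ is the analogue of \eqref{toe}, and the Gram-matrix inequality is exactly the computation of Proposition~\ref{prop} with $A=\iota^{*}$. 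So your route and the paper's intended route coincide; you have simply made the deduction honest by noting the single-space/two-space mismatch and by invoking the closed graph theorem to secure boundedness of the inclusion before taking adjoints.
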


The spaces we are concerned with in this paper are all reproducing kernel Hilbert spaces. One can calculate the reproducing kernel through an orthonormal basis of a reproducing kernel Hilbert space (see e.g. \cite{pau16}*{Theorem 2.4}). 
\begin{lem}\label{orn}
Let $\mathcal{H}(K)$ be a reproducing kernel Hilbert space on $X$. If $\{e_s | s\in \Gamma\}$ is an orthonormal basis for $\cH(K)$, then
$$
K(z,w)=\sum_{s\in \Gamma} \ol{e_s(w)}e_s(z).
$$
\end{lem}

The Hardy space $H^2$ has reproducing kernel 
$$
k_w^S(z)=\frac{1}{1-\bw z},
$$
which is called the Szeg\H{o} kernel. 
The Bergman space $A^2$ has reproducing kernel 
$$
k_w^0 (z)=\frac{1}{(1-\bw z)^2}.
$$

For a function $b\in H^\infty_1$, it is well-known that $b$ is a multiplier of both $H^2$ and $A^2$ and has multiplier norm equals to $1$. It follows from Theorem \ref{m} that
\beq\label{hb}
k_w^b(z)=\frac{1-\ol{b(w)}b(z)}{1-\bw z}
\eeq
and 
\beq\label{ab}
k_w^{0,b}(z)=\frac{1-\ol{b(w)}b(z)}{(1-\bw z)^{2}}
\eeq
are positive kernels. In fact, \eqref{hb} is the reproducing kernel for de Brange-Rovnyak space $\cH(b)$ \cite{sar94}*{II-3} and \eqref{ab} is the reproducing kernel for sub-Bergman Hilbert space $\cA(b)$ \cite{zhu96}*{Proposition 3.1}.

\section{Main Results}\label{pf}

To state the main results, we define more general weighted sub-Bergman Hilbert spaces. For $\Ga\geq -1$, let $A^2_\Ga$ be the reproducing kernel Hilbert space on $\DD$ with reproducing kernel
\beq\label{ker}
k^\Ga_w(z)=\frac{1}{(1-\bw z)^{\Ga+2}}.
\eeq
If $\Ga=-1$, it is just the Hardy space $H^2$. If $\Ga>-1$, $A^2_\Ga$ is the space of all analytic functions on $\DD$ that are square-integrable with respect to the measure $dA_\Ga$, where 
$$
dA_\Ga(z)=(\Ga+1)(1-|z|^2)^\Ga dA(z).
$$
In the latter case, the space $A_\Ga^2$ is called a weighted Bergman space (see e.g. \cite{zhu2}*{Chapter 4} for details).

For $\Gvp\in L^\infty(\DD)$, the weighted Toeplitz operator on $A^2_\Ga$ ($\Ga>-1$) is defined by
$$
T^\Ga_\Gvp f=P_\Ga (\Gvp f),
$$
where $P_\Ga$ is the orthogonal projection from $L^2(\DD, dA_\Ga)$ to $A^2_\Ga$.
When $\Ga=0$, $A_\Ga^2=A^2$, the standard Bergman space, and $T^\Ga_\Gvp=\tiT_\Gvp$, the Bergman Toeplitz operator defined in \ref{berg}.
Let $b\in H^\infty_1$. We define a weighted sub-Bergman Hilbert space as
$$
\cA_\Ga(b)=\cM_{A^2_\Ga}((I-T^\Ga_b T^{\Ga}_{\bar b})^{1\over 2}).
$$
The reproducing kernel of $\cA_\Ga(b)$ is
\beq\label{ker2}
k^{b, \Ga}_w(z)= \frac{1-\ol{b(w)}b(z)}{(1-\bw z)^{\Ga+2}}.
\eeq

It is easy to generalize Theorem \ref{A} to the weighted case (see e.g. \cite{sul06}*{Theorem 3.5}). 
We strengthen Theorem \ref{A} in the following two theorems. 

\begin{thm}\label{sub}
Let $b$ be a non-constant function in $H^\infty_1$. For every $\Ga\geq 0$, we have
$$A^2_{\Ga-1}\subset\cA_\Ga(b).$$
\end{thm}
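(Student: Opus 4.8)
The plan is to reduce the inclusion to a single kernel inequality and then verify it by composing $b$ with a disk automorphism. By Theorem \ref{in}, $A^2_{\Ga-1}\subset\cA_\Ga(b)$ is equivalent to the existence of a constant $\Gd>0$ with $k^{\Ga-1}_w\preceq\Gd\,k^{b,\Ga}_w$, where $k^{\Ga-1}_w(z)=(1-\bw z)^{-(\Ga+1)}$ and $k^{b,\Ga}_w$ is as in \eqref{ker2}; this is what I intend to prove. Write $a=b(0)$. Since $b$ is non-constant, the maximum principle forces $|a|<1$, and this is the only point where non-constancy of $b$ is used.

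I would first handle the special case $b(0)=0$, for which in fact $\Gd=1$ works. By the Schwarz lemma, $b(z)=z\,b_1(z)$ with $b_1\in H^\infty_1$, and a short computation using \eqref{ker2} and the identity $k^{\Ga-1}_w(z)=\dfrac{1-\bw z}{(1-\bw z)^{\Ga+2}}$ gives
\[
k^{b,\Ga}_w(z)-k^{\Ga-1}_w(z)=\frac{\bw z\,\bigl(1-\ol{b_1(w)}b_1(z)\bigr)}{(1-\bw z)^{\Ga+2}}=\bw z\cdot k^{b_1,\Ga}_w(z),
\]
a product of the two positive kernels $\bw z$ and $k^{b_1,\Ga}_w$, hence $k^{\Ga-1}_w\preceq k^{b,\Ga}_w$.

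Next I would reduce the general case to this one. Put $b_0(z)=\dfrac{a-b(z)}{1-\bar a\,b(z)}$ and $\phi(z)=1-\bar a\,b(z)$; then $b_0\in H^\infty_1$ with $b_0(0)=0$. Feeding the values of $b$ into the classical disk-automorphism identity $1-\ol{b_0(w)}b_0(z)=\dfrac{(1-|a|^2)\,(1-\ol{b(w)}b(z))}{\ol{\phi(w)}\,\phi(z)}$ and dividing through by $(1-\bw z)^{\Ga+2}$ yields the key factorization
\[
k^{b,\Ga}_w(z)=\frac{\ol{\phi(w)}\,\phi(z)}{1-|a|^2}\;k^{b_0,\Ga}_w(z).
\]
Since $\|b\|_\infty\le1$ and $|a|<1$ we have $|\phi|\ge 1-|a|>0$ on $\DD$, so $1/\phi\in H^\infty$ with $\|1/\phi\|_\infty\le(1-|a|)^{-1}$; in particular $1/\phi$ is a multiplier of $A^2_{\Ga-1}$ of multiplier norm at most $(1-|a|)^{-1}$, and Theorem \ref{m} gives $\dfrac{1}{\ol{\phi(w)}\,\phi(z)}\,k^{\Ga-1}_w\preceq\dfrac{1}{(1-|a|)^2}\,k^{\Ga-1}_w$. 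Combining this with the special case $k^{\Ga-1}_w\preceq k^{b_0,\Ga}_w$ and multiplying both sides by the rank-one positive kernel $\ol{\phi(w)}\phi(z)$ gives
\[
k^{\Ga-1}_w\ \preceq\ \frac{1}{(1-|a|)^2}\,\ol{\phi(w)}\,\phi(z)\,k^{b_0,\Ga}_w\ =\ \frac{1+|a|}{1-|a|}\,k^{b,\Ga}_w ,
\]
so $\Gd=\dfrac{1+|b(0)|}{1-|b(0)|}$ works, and Theorem \ref{in} finishes the proof.

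The step I expect to be the crux is the reduction in the last paragraph: a naive attempt to show directly that $\Gd\bigl(1-\ol{b(w)}b(z)\bigr)-(1-\bw z)$ is a positive kernel fails once $b(0)\ne 0$, so one really has to extract the rank-one factor $\ol{\phi(w)}\phi(z)$ produced by the automorphism and deal with it separately. The only input beyond the formalism of Section \ref{Pre} is the standard fact that every bounded analytic function is a multiplier of the weighted Bergman space $A^2_{\Ga-1}$ with multiplier norm equal to its supremum norm — the obvious extension of the statement recorded in Section \ref{Pre} for $H^2$ and $A^2$ — and it is exactly this that lets the scalar bound $\|1/\phi\|_\infty$ pass through the kernel $\ol{\phi(w)}\phi(z)$; everything else is additions and multiplications of positive kernels and the relation $\preceq$.
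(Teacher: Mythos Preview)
Your argument is correct and reaches the same kernel inequality with the same constant $\dfrac{1+|b(0)|}{1-|b(0)|}$ as the paper, but by a genuinely different route. The paper works with the normalized reproducing kernel $f_0(z)=\dfrac{1-\ol{b(0)}b(z)}{\sqrt{1-|b(0)|^2}}$ of $\cH(b)$ at the origin: from $\|f_0\|_{\cH(b)}=1$ and Theorem~\ref{be} it gets $\ol{f_0(w)}f_0(z)\preceq k^b_w(z)$, multiplies by the Szeg\H{o} kernel, and then uses that $T_{\bar f_0}$ is bounded below (since $f_0$ is invertible in $H^\infty$) together with Proposition~\ref{prop}(2). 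Your proof instead first settles the case $b(0)=0$ by the Schwarz factorization $b=zb_1$ and the identity $k^{b,\Ga}_w-k^{\Ga-1}_w=\bw z\cdot k^{b_1,\Ga}_w$, and then reduces the general case via the M\"obius substitution $b\mapsto b_0=\dfrac{a-b}{1-\bar a b}$ and the rank-one factor $\ol{\phi(w)}\phi(z)$, invoking Theorem~\ref{m} for the bounded multiplier $1/\phi$. The two arguments are cousins (your $\phi$ is $\sqrt{1-|a|^2}\,f_0$, and your special case $k^{\Ga-1}_w\preceq k^{b_0,\Ga}_w$ is exactly the kernel form of the paper's inequality $\ol{f_0(w)}f_0(z)\preceq k^b_w$ after the automorphism), but your presentation avoids Theorem~\ref{be} and Proposition~\ref{prop} in favor of the Schwarz lemma and the classical automorphism identity, which makes the mechanism slightly more explicit and self-contained. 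The paper's version, on the other hand, foregrounds the $\cH(b)$ structure and shows how the normalized de Branges--Rovnyak kernel at $0$ is the right object to compare against.
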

\begin{proof}
By \eqref{ker}, \eqref{ker2} and Theorem \ref{in}, it is sufficient to show that
$$
\frac{1}{(1-\bw z)^{\Ga+1}} \preceq C\cdot \frac{1-\ol{b(w)}b(z)}{(1-\bw z)^{\Ga+2}}
$$
for some constant $C$.
Since 
$$
\frac{1}{(1-\bw z)^{\Ga}}\succeq 0,
$$
we only need to show
\beq\label{1}
\frac{1}{1-\bw z}	\preceq C\cdot \frac{1-\ol{b(w)}b(z)}{(1-\bw z)^{2}}.
\eeq
Let $f_0$ be the normalized reproducing kernel of $\cH(b)$ at $0$, i.e.
$$
f_0(z)=\frac{1-\ol{b(0)}b(z)}{\sqrt{1-|b(0)|^2}}.
$$
Then $f_0\in \cH(b)\cap H^\infty$ and $||f_0||_{\cH(b)}=1$.  By Theorem \ref{be},
$$
\ol{f_0(w)}f_0(z)	\preceq \frac{1-\ol{b(w)}b(z)}{1-\bw z}.
$$
Multiplying both sides by the Szeg\H{o} kernel $$\frac{1}{1-\bw z}=\la k^S_w, k^S_z\ra_{H^2},$$
we have
$$
\ol{f_0(w)}f_0(z)\la k^S_w, k^S_z\ra_{H^2} 	\preceq \frac{1-\ol{b(w)}b(z)}{(1-\bw z)^2}.
$$
By \eqref{toe}, $$T_{\barf_0} k^S_w=\ol{f_0(w)} k^S_w,$$ for every $w\in \DD$. 
We get
\beq\label{2}
\la T_{\barf_0}k^S_w, T_{\barf_0}k^S_z\ra_{H^2} 	\preceq \frac{1-\ol{b(w)}b(z)}{(1-\bw z)^2}.
\eeq

Notice that 
$f_0$ is invertible in $H^\infty$ and $$||f_0^{-1}||_\infty\leq \sqrt{\frac{1+|b(0)|}{1-|b(0)|}}.$$
Hence
$$
||h||_{H^2}\leq ||T_{\bar{f}_0^{-1}}||\cdot||T_{\bar{f}_0}h||_{H^2}=||f_0^{-1}||_{\infty}||T_{\bar{f}_0}h||_{H^2}\leq  \sqrt{\frac{1+|b(0)|}{1-|b(0)|}}||T_{\bar{f}_0}h||_{H^2}.
$$

So we have from Proposition \ref{prop} that,
$$
 C \la k^S_w, k^S_z\ra_{H^2}	\preceq\la T_{\barf_0}k^S_w, T_{\barf_0}k^S_z\ra_{H^2}.
$$
This together with \eqref{2} implies \eqref{1}.

\end{proof}

\begin{thm}\label{sub2}
Let $b$ be a non-constant function in $H^\infty_1$. For every $\Ga\geq 0$, $$A^2_{\Ga-1}=\cA_\Ga(b)$$
if and only if $b$ is a finite Blaschke product.
\end{thm}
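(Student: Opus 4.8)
The plan is to reduce both directions, through Theorem \ref{in}, to a comparison between the reproducing kernel $k^{b,\Ga}_w(z)=\frac{1-\ol{b(w)}b(z)}{(1-\bw z)^{\Ga+2}}$ of $\cA_\Ga(b)$ and the kernel $k^{\Ga-1}_w(z)=\frac{1}{(1-\bw z)^{\Ga+1}}$ of $A^2_{\Ga-1}$. Theorem \ref{sub} already gives $A^2_{\Ga-1}\subset\cA_\Ga(b)$, so only the reverse inclusion is in question, and by Theorem \ref{in} it holds if and only if $k^{b,\Ga}_w\preceq\Gd\,k^{\Ga-1}_w$ for some $\Gd>0$. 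Thus Theorem \ref{sub2} amounts to: this kernel domination holds precisely when $b$ is a finite Blaschke product.

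For the sufficiency, suppose $b$ is a finite Blaschke product of degree $n$. Then $\cH(b)$ is the model space $H^2\ominus bH^2$, which is $n$-dimensional and, since its elements are rational functions with poles outside $\ol{\DD}$, is contained in $H^\infty$. Fixing an orthonormal basis $\{e_1,\dots,e_n\}\subset H^\infty$ of $\cH(b)$, I would use \eqref{hb} and Lemma \ref{orn} to write
\[
\frac{1-\ol{b(w)}b(z)}{1-\bw z}=\sum_{k=1}^{n}\ol{e_k(w)}e_k(z),\qquad\text{hence}\qquad k^{b,\Ga}_w(z)=\sum_{k=1}^{n}\ol{e_k(w)}e_k(z)\,k^{\Ga-1}_w(z).
\]
Each $e_k$ is a multiplier of $A^2_{\Ga-1}$ with $\|M_{e_k}\|=\|e_k\|_\infty$, and by \eqref{toe} the $k$-th summand equals $\la M_{e_k}^{*}k^{\Ga-1}_w,M_{e_k}^{*}k^{\Ga-1}_z\ra$, which by Proposition \ref{prop}(1) is $\preceq\|e_k\|_\infty^2\,k^{\Ga-1}_w(z)$. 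Summing, $k^{b,\Ga}_w\preceq\big(\sum_{k}\|e_k\|_\infty^2\big)k^{\Ga-1}_w$, so $\cA_\Ga(b)\subset A^2_{\Ga-1}$ by Theorem \ref{in}; together with Theorem \ref{sub} this gives $\cA_\Ga(b)=A^2_{\Ga-1}$. (This implication is also the weighted form of Theorem \ref{A}, \cite{sul06}*{Theorem 3.5}.)

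For the necessity, assume $\cA_\Ga(b)=A^2_{\Ga-1}$. Then $\cA_\Ga(b)\subset A^2_{\Ga-1}$, so the kernel $\Gd\,k^{\Ga-1}_w(z)-k^{b,\Ga}_w(z)$ is positive for some $\Gd>0$; testing its positivity at the single point $w$ and clearing denominators yields $\Gd(1-|w|^2)\ge 1-|b(w)|^2$, i.e.
\[
M:=\sup_{w\in\DD}\frac{1-|b(w)|^2}{1-|w|^2}<\infty .
\]
I would then read off the structure of $b$ in three steps. (i) Since $1-|b(w)|^2\le M(1-|w|^2)\to0$ as $|w|\to1$, we have $|b(w)|\to1$, so $b$ is a non-constant inner function. (ii) At any zero $a$ of $b$, $1-|a|^2\ge 1/M$, so the zeros of $b$ lie in a fixed compact subset of $\DD$; as $b\not\equiv0$, $b$ has only finitely many zeros, hence $b=c\,B\,S_\mu$ with $|c|=1$, $B$ a finite Blaschke product, and $S_\mu$ the singular inner function of a positive singular measure $\mu$ on $\TT$. (iii) Writing $|b(w)|^2=|B(w)|^2e^{-2P[\mu](w)}$ with $P[\mu]$ the Poisson integral of $\mu$, and using $|B(w)|\to1$ as $|w|\to1$, the bound $M<\infty$ forces $P[\mu](w)\to0$ as $|w|\to1$; in particular $P[\mu]$ is bounded on $\DD$. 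But $\int_{\TT}P[\mu](r\zeta)\,\frac{|d\zeta|}{2\pi}=\mu(\TT)$ for every $r<1$, while $P[\mu](r\zeta)\to0$ as $r\to1$ for each $\zeta$, so dominated convergence gives $\mu(\TT)=0$. Thus $\mu=0$ and $b=cB$ is a finite Blaschke product.

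The step I expect to be the main obstacle is (iii), eliminating a nontrivial singular inner factor: it is where one must invoke the right elementary fact about the Poisson integral of a singular measure (here, that it cannot simultaneously be bounded and tend to $0$ at the boundary unless the measure vanishes). Everything else — the reduction to kernel domination, the model-space computation, and the passage to the diagonal — is routine once Section \ref{Pre} is in place; one should, however, check carefully that nothing is lost in restricting the positivity of $\Gd k^{\Ga-1}_w-k^{b,\Ga}_w$ to the single point $w$, which is precisely what steps (i)--(iii) confirm by recovering the finite Blaschke conclusion from that diagonal bound alone.
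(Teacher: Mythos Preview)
Your proof is correct and follows essentially the same route as the paper's: both directions are reduced via Theorem \ref{in} to a comparison of $k^{b,\Ga}$ with $k^{\Ga-1}$, the forward direction expands $\frac{1-\ol{b(w)}b(z)}{1-\bw z}$ through an orthonormal basis of the finite-dimensional model space $\cH(b)$ and bounds each term using Proposition \ref{prop}(1), and the backward direction restricts the kernel inequality to the diagonal to obtain $\sup_{w\in\DD}\frac{1-|b(w)|^2}{1-|w|^2}<\infty$. The only real difference is bookkeeping: the paper bounds the basis elements via the diagonal identity $\sum_n|f_n(w)|^2=\frac{1-|b(w)|^2}{1-|w|^2}$ and then quotes \cite{zhu03}*{Lemma 1} for the equivalence of the diagonal bound with $b$ being a finite Blaschke product, whereas you invoke rationality of the basis elements and supply a self-contained proof of that lemma via steps (i)--(iii); your Poisson-integral argument in (iii) is valid and is precisely what underlies the cited lemma.
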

\begin{proof}
According to Theorem \ref{sub}, $$A^2_{\Ga-1}=\cA_\Ga(b)$$ if and only if $$\cA_\Ga(b)\subset A^2_{\Ga-1}.$$

Suppose $b$ is a finite Blaschke product of degree $N$.
Then $\cH(b)$ is a Hilbert space of finite dimension $N$ (see e.g. \cite{garros}), and there exists a constant $C$ such that
$$
\frac{1-|b(w)|^2}{1-|w|^2}\leq C,
$$
for all $w\in\DD$ (\cite{zhu03}*{Lemma 1}). 
Let $\{f_n\}_{n=0}^{N-1}$ be an orthonormal basis for $\cH(b)$. Then by Lemma \ref{orn}, we have 
$$
\frac{1-\ol{b(w)}b(z)}{1-\bw z}=\sum_{n=0}^{N-1}\ol{f_n(w)}f_n(z).
$$
In particular,
$$
\sum_{n=0}^{N-1}|f_n(w)|^2=\frac{1-|b(w)|^2}{1-|w|^2}\leq C,
$$
for every $w\in\DD$. Thus for every $n$,
$$
||T_{\barf_n}||=||f_n||_\infty\leq \sqrt{C}.
$$
By Proposition \ref{prop}, 
$$
\la T_{\barf_n}k^S_w, T_{\barf_n}k^S_z\ra_{H^2}\preceq C\la k^S_w, k^S_z\ra_{H^2}.
$$
Thus
\begin{align*}
\frac{1-\ol{b(w)}b(z)}{(1-\bw z)^2}&=\sum_{n=0}^{N-1}\ol{f_n(w)}f_n(z) \la k^S_w, k^S_z\ra_{H^2}=\sum_{n=0}^{N-1}\la T_{\barf_n}k^S_w, T_{\barf_n}k^S_z\ra_{H^2}\\
&\preceq NC \la k^S_w, k^S_z\ra_{H^2}=NC\cdot \frac{1}{1-\bw z}.
\end{align*}
Multiplying both sides by the positive kernel $$\frac{1}{(1-\bw z)^{\Ga}},$$ we have 
$$
\frac{1-\ol{b(w)}b(z)}{(1-\bw z)^{\Ga+2}}\preceq \frac{NC}{(1-\bw z)^{\Ga+1}}.
$$
By Theorem \ref{in}, we get $$\cA_\Ga(b)\subset A^2_{\Ga-1}.$$

For the other direction, assume $\cA_\Ga(b)\subset A^2_{\Ga-1}$. Using Theorem \ref{in} again, we have 
$$
\frac{1-\ol{b(w)}b(z)}{(1-\bw z)^{\Ga+2}}\preceq \frac{C}{(1-\bw z)^{\Ga+1}}.
$$
for some constant $C$. 
Then
$$
\frac{1-|b(z)|^2}{(1-|z|^2)^{\Ga+2}}\leq C \frac{1}{(1-|z|^2)^{\Ga+1}},
$$
which implies
$$
\frac{1-|b(z)|^2}{1-|z|^2}\leq C,
$$
for all $z\in\DD$. Therefore, $b$ is a finite Blaschke product (see e.g. \cite{zhu03}*{Lemma 1}).
\end{proof}

{\bf Proof of Theorem \ref{m1}.} This is just a special case of Theorem \ref{sub} and \ref{sub2} when $\Ga=0$.
\qed

\bibliography{references}
\end{document}